\theoremstyle{plain}
\newtheorem{thm}{Theorem}
\newtheorem{prop}[thm]{Proposition}
\newtheorem{lemma}[thm]{Lemma}
\newtheorem{cor}[thm]{Corollary}
\theoremstyle{definition}
\theoremstyle{remark}
\DeclareMathOperator{\supp}{supp}
\newcommand{\N}{\mathbb{N}}
\subjclass[2000]{46B03, 46B06, 46B10, 46B25}
\begin{document}

\title{Duals of Tirilman spaces have unique subsymmetric basic sequences}


\author{S. J. Dilworth}
\address{Department of Mathematics, University of South Carolina, Columbia, SC
29208, USA}
\email{dilworth@math.sc.edu}

\author{D. Kutzarova}
\address{Department of Mathematics, University of Illinois Urbana-Champaign, Urbana, IL 61807, USA; Institute of Mathematics and Informatics, Bulgarian
Academy of Sciences, Sofia, Bulgaria}
\email{denka@illinois.edu}

\author{B. Sar\i}
\address{Department of Mathematics, University of North Texas 1155 Union Circle 311430
Denton, Texas 76203-5017, USA}
\email{bunyamin.sari@unt.edu}

\author{S. Stankov}
\email{erejnion@gmail.com}
\thanks{The first author was supported by Simons Foundation Collaboration Grant No. 849142. The second author was
supported by Simons Foundation Collaboration Grant No. 636954. The first three
authors thank the Workshop in Analysis and Probability at
Texas A\&M University (2022) for support where this work was initiated.}
\maketitle

\begin{abstract}
The Tirilman spaces $Ti(p,\gamma)$, $1<p<\infty$, were introduced by Casazza and Shura as variations of the spaces constructed by Tzafriri. We prove that all subsymmetric basic sequences in the dual space $Ti^*(p,\gamma)$ are equivalent to its  canonical subsymmetic but not symmetric basis.
\end{abstract}

\section{Introduction}

Symmetric structures play an important role in the theory of Banach spaces. A basic sequence $(x_j)_{j=1}^\infty$ is symmetric if the rearranged sequence $(x_{\pi(j)})_{j=1}^\infty$ is equivalent to  $(x_j)_{j=1}^\infty$ for any permutation $\pi$ of $\N$. Recall that a sequence $(x_j)_{j=1}^\infty$ is a basic sequence if it is a (Schauder) basis of its closed linear span; two basic sequences $(x_j)_{j=1}^\infty$ and $(y_j)_{j=1}^\infty$ are said to be equivalent provided a series $\sum_{j=1}^\infty a_jx_j$ converges if and only if $\sum_{j=1}^\infty a_jy_j$ does.

The class of subsymmetric basic sequences, that is, those that are unconditional and equivalent to all of their {\em subsequences} \cite{LT}, is formally more general than the class of symmetric ones. For a while, these two concepts were believed to be equivalent until Garling \cite{G} provided a counterexample. Later, subsymmetric bases became important on their own within the general theory. For instance, the first arbitrarily distortable Schlumprecht space \cite{S} has a subsymmetric basis which is not symmetric.

Albiac, Ansorena and Wallis \cite{AAW} used Garling-type spaces to provide the first example of a Banach with a unique subsymmetric basis which is not symmetric. However, as shown in a sequel paper \cite{AADK}, that space contains a continuum of non-equivalent subsymmetric basic sequences. Altshuler \cite{A} (see also Example 3.b.10 in \cite{LT}) constructed a space which is not isomorphic to $c_0$ or $\ell_p$ for any $1<p<\infty$ and in which all symmetric basic sequences are equivalent to its symmetric basis. Recently, the first example of a Banach space with {\em a unique subsymmetric basic sequence which is not symmetric} is given in \cite{CDKM}.
That answered a question posed in \cite{KMP} and \cite{AADK}. The space under consideration was $Su(T^*)$ \cite{CS}, the subsymmetric version of $T^*$. As it became customary, $T$ is the space considered by Figiel and Johnson \cite{FJ} and its dual $T^*$ is the original space constructed by Tsirelson \cite{T}, the first example of a space which does not contain an isomorphic copy of $c_0$ or $\ell_p$, $1\le p<\infty$.

In this paper we give {\em more examples of spaces with a subsymmetric but not symmetric basis which contain, up to equivalence, a unique subsymmetric basic sequence.} These examples are based on Tzafriri spaces.  Tzafriri \cite{Tz} had constructed (counter)-examples of spaces with (symmetric bases) showing that the notions of equal-norm type $p$ and equal-norm-cotype $q$ are not equivalent to the notions of type $p$ and cotype $q$ for $p,q\neq 2$, respectively. The Tirilman spaces $Ti(p,\gamma)$, where $1<p<\infty$ and $0<\gamma<1$, are modified Tzafriri spaces, which were introduced and studied by Casazza and Shura \cite{CS}. They were named after Tzafriri's Romanian surname. We prove that for $1<p<\infty$ and sufficiently small $0<\gamma<1$, the dual space $Ti^*(p,\gamma)$, whose canonical basis is subsymmetric but not symmetric contains, up to equivalence, a unique subsymmetric basic sequence. That is, all the subsymmetric basic sequences are equivalent to the canonical basis. The method of our proof is parallel to the one in \cite{CDKM}: While there the normalized block bases $(x_j)$ of the canonical basis of $Su(T^*)$ with the property $\|x_j\|_\infty\to 0$ are shown to be asymptotic-$c_0$ sequences, we show that the similar block bases in $Ti^*(p,\gamma)$ yield asymptotic-$\ell_q$ sequences, where $\frac1p+\frac1q=1$. Moreover, unlike its dual {\em $Ti(p,\gamma)$ has continuum many non-equivalent subsymmetric basic sequences.} This follows immediately from Theorem 21 of \cite{CDKM} which states that if a subsymmetric basis $(e_i)$ is not equivalent to the unit vector basis of $c_0$ or $\ell_p$ then either $(e_i)$ or $(e^*_i)$ admits a continuum of non-equivalent subsymmetric block bases. 

\section{Spaces with a unique subsymmetric basic sequence}

Given two basic sequences
$(x_n)_{n=1}^\infty$ and $(y_n)_{n=1}^\infty$ in Banach spaces $X$ and
$Y$, respectively, we say that $(x_n)_{n=1}^\infty$ \emph{$K$-dominates}
$(y_n)_{n=1}^\infty$ if the bounded linear operator  $T(x_n)=y_n$ from
$[(x_n)_{n=1}^\infty]$ to $[(y_n)_{n=1}^\infty]$ has norm $\|T\| \le K$.  We say that  $(x_n)_{n=1}^\infty$ \emph{dominates}  $(y_n)_{n=1}^\infty$ if  $(x_n)_{n=1}^\infty$ $K$-dominates
$(y_n)_{n=1}^\infty$ for some $K<\infty$.  A \emph{block basis} with
respect to a basic sequence $(x_n)_{n=1}^\infty$ is a sequence
$(y_n)_{n=1}^\infty$ of non-zero vectors of the form $y_n=\sum_{k=p_n+1}^{p_{n+1}} a_k x_k$ where $p_1 < p_2 < \cdots$ is an
increasing sequence of natural numbers. For a vector $x$ in the closed linear span of $(x_n)_{n=1}^\infty$, its support (with respect to $(x_n)_{n=1}^\infty$) is the set of indices of its non-zero coefficients. For finite sets of natural numbers $E$ and $F$ we say that $E < F$  if $\max(E)
<\min(F)$. For a natural number $n$, we say $n < x$, resp. $n \le x$, if $n <
\min(\supp(x))$, resp. $n \le  \min(\supp(x))$. A basic sequence $(x_n)$ is called \emph{1-subsymmetric} if it is $1$-unconditional and isometrically equivalent to its subsequences.

A basic sequence $(x_j)_{j=1}^\infty$ is called \emph{(strongly) asymptotic}-$\ell_p$, $1\le p<\infty$ if there exist a constant $C>0$ such that for every $m\in\N$ there is an $M\in \N$ such that for every normalized block basis $(y_j)_{j=1}^m$ of $(x_j)_{j=M}^\infty$ and any set of real numbers $(a_i)$, we have
$$
\frac1C\left(\sum_{i=1}^m |a_i|^p\right)^{\frac1p}\le \left\|\sum_{i=1}^m a_iy_i\right\|\le C\left(\sum_{i=1}^m |a_i|^p\right)^{\frac1p}.
$$
Although we will drop the term `strongly' when referring to asymptotic-$\ell_p$ sequences, it is important to note this is a stronger version of the original definition from \cite{MMT} which was given in a more general setting.

Let $1<p<\infty$ and $0<\gamma<1$. As in the case of Tsirelson space, the norm is defined via an implicit equation. For all $a=(a_i)\in c_{00}$, the
linear space of finitely supported real-valued sequences, define
$$
\|a\|=\max\left\{ \|a\|_\infty,\gamma\sup\frac{\sum_{j=1}^n\|E_ja\|}{n^{\frac1q}}\right\},
$$
where $\frac1p+\frac1q=1$ and the inner supremum is taken over all finite consecutive sets of natural numbers $1\le E_1<\cdots<E_n$ and all $n$. This norm can be computed via the limit of a recursive sequence of norms. We refer to  \cite{CS}, Section X.d.5, for more details.
The Tirilman space $Ti(p,\gamma)$ is the completion of $(c_{00},\|\cdot\|)$. It follows from the definition that the unit vectors $(e_n)_{n=1}^\infty$ form a 1-subsymmetric basis for $Ti(p,\gamma)$. We shall summarize some of their known properties. The first one is the obvious analogue of Proposition X.d.8 \cite{CS} which was proved for $Ti(2,\gamma)$.

\begin{prop}\label{p1}
For every $1<p<\infty$ and $0<\gamma<1$, the canonical basis $(e_n)_{n=1}^\infty$ is $1$-dominated by every normalized block basis of $(e_n)_{n=1}^\infty$.
\end{prop}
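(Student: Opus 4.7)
The plan is to use the fact that the Tirilman norm is the increasing limit of the recursive norm sequence $\|\cdot\|_k$ defined by $\|x\|_0 := \|x\|_\infty$ and
\[
\|x\|_{k+1} := \max\left\{\|x\|_\infty,\ \gamma\sup \frac{\sum_{j=1}^n \|E_j x\|_k}{n^{1/q}}\right\},
\]
where the sup is taken over all $n \in \N$ and all consecutive families $1 \le E_1 < \cdots < E_n$. Since $\|x\|_k \nearrow \|x\|$, it suffices to prove by induction on $k$ the following slightly stronger claim: for every finite $A \subset \N$, every choice of scalars $(a_i)_{i \in A}$, and every normalized block basis $(y_i)_{i \in A}$ of $(e_n)$ with $y_i$ supported on the consecutive blocks $F_i$,
\[
\left\|\sum_{i \in A} a_i e_i\right\|_k \le \left\|\sum_{i \in A} a_i y_i\right\|.
\]

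For the base case $k=0$: the basis $(e_n)$ is $1$-unconditional (an easy induction on the recursive norm levels shows the norm depends only on $|a_i|$'s and decreases under coordinate projection), so $|a_{i_0}| = \|a_{i_0} y_{i_0}\| \le \|\sum_i a_i y_i\|$ by projecting onto the block $F_{i_0}$; taking the max over $i_0 \in A$ gives the claim. For the inductive step, write $x := \sum_{i \in A} a_i e_i$ and $y := \sum_{i \in A} a_i y_i$, fix a consecutive family $E_1 < \cdots < E_n$ testing $\|x\|_{k+1}$, and set $E_j' := \bigcup_{i \in E_j \cap A} F_i$. The block basis property of $(F_i)$ ensures that the nonempty $E_j'$ again form a consecutive family in $\N$ (empty $E_j'$ may be discarded, which only improves the bound since $n^{-1/q}$ increases as $n$ decreases). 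Applying the inductive hypothesis to the sub-block basis $(y_i)_{i \in E_j \cap A}$ and scalars $(a_i)_{i \in E_j \cap A}$ yields $\|E_j x\|_k \le \|E_j' y\|$. Summing and multiplying by $\gamma n^{-1/q}$ gives the desired bound $\le \|y\|$, and combined with $\|x\|_\infty \le \|y\|$ from the base argument, we conclude $\|x\|_{k+1} \le \|y\|$. Letting $k \to \infty$ finishes the proof.

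The main subtlety, rather than a genuine obstacle, is setting up the inductive hypothesis in the right generality: stating it for arbitrary finite index sets $A \subset \N$ rather than intervals makes the restriction step in the induction immediate, and sidesteps any need to invoke $1$-subsymmetry of the intermediate norms. The only other check is confirming that $(E_j')$ inherits the consecutive-ordering condition from the block basis, which is immediate from $i < i'$ in $\N$ forcing $F_i < F_{i'}$ in $\N$. Everything else is a transparent unwinding of the recursive definition of the Tirilman norm.
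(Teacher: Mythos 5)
Your proof is correct: the induction on the recursive norm hierarchy $\|\cdot\|_k \nearrow \|\cdot\|$, with the inductive hypothesis stated for arbitrary finite index sets $A$ so that the restriction to $E_j\cap A$ is immediate, goes through exactly as you describe. The paper offers no proof of this proposition, deferring to Proposition X.d.8 of \cite{CS}; your argument is precisely the standard one used there (adapted from $p=2$ to general $p$), so this is essentially the same approach with the details filled in.
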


Some further properties of $Ti(p,\gamma)$ that were proved in \cite{CS} for $Ti(2,\gamma)$ were listed in Theorem 6.1 \cite{Sa}.

\begin{prop}\label{p2}
Let $1<p<\infty$. Then for sufficiently small $0<\gamma<1$ the  following hold for $Ti(p,\gamma)$.

$(i)$ for any normalized successive blocks $(x_j)_{j=1}^\infty$ of the basis $(e_i)$, we have
$$
\gamma n^{\frac1p}\le\left\|\sum_{j=1}^n x_j\right\|\le 3^{\frac1q}n^{\frac1p}.
$$

$(ii)$ $Ti(p,\gamma)$ does not contain isomorphs of any $\ell_r$, $1\le r<\infty$ or of $c_0$. In particular, $Ti(p,\gamma)$ is reflexive.
\end{prop}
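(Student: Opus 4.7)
The proof splits naturally into (i) and (ii), with (i) the technical heart.

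For the lower bound of (i), the estimate is immediate from the implicit norm formula. Apply it to $a=\sum_{j=1}^n x_j$ with the admissible partition $E_j:=\supp(x_j)$, so that $E_j a = x_j$ and $\|x_j\|=1$:
\[
\Bigl\|\sum_{j=1}^n x_j\Bigr\| \ge \gamma\,\frac{\sum_{j=1}^n \|x_j\|}{n^{1/q}} = \gamma\, n^{1-1/q} = \gamma\, n^{1/p}.
\]
No restriction on $\gamma$ is needed here.

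The upper bound of (i) is where the smallness of $\gamma$ is used, and I would prove it by induction on $n$. Let $K_n$ denote the supremum of $\|\sum_{j=1}^n x_j\|/n^{1/p}$ over all normalized successive blocks $(x_j)_{j=1}^n$ of $(e_i)$. Fix such a block and an admissible partition $E_1<\cdots<E_m$ nearly realizing the outer supremum in the norm formula, and classify each $E_i$ as \emph{inner} (if $E_i\subseteq \supp(x_{j(i)})$ for some $j$) or \emph{straddling} (if $E_i$ intersects two or more consecutive $\supp(x_j)$). The straddling indices number at most $n-1$ since distinct straddling sets bridge distinct block boundaries. The inner contributions to $\sum_i \|E_i\sum_j x_j\|$ can be regrouped by $j$: for each fixed $j$, those $E_i$'s contained in $\supp(x_j)$ form a partition of a subset of $\supp(x_j)$ with, say, $m_j$ pieces, and the implicit formula applied to the normalized vector $x_j$ itself gives $\sum \|E_i x_j\| \le m_j^{1/q}/\gamma$. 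Summing over $j$ and applying Jensen's inequality to the concave function $t\mapsto t^{1/q}$ bounds the total inner contribution by $n^{1/p} m^{1/q}/\gamma$. The straddling contributions are bounded by the inductive hypothesis: each straddling $E_i$ carves out a normalized sum of fewer than $n$ blocks, so $\|E_i\sum_j x_j\|\le K_{|A_i|} |A_i|^{1/p}$ where $|A_i|$ counts the blocks met by $E_i$; another Jensen/H\"older application gives a contribution of at most $2^{1/p} K\, n$ for $K=\max_{n'<n} K_{n'}$. Combining, dividing by $m^{1/q}$, and multiplying by $\gamma$ yields a recursion $K_n \le 1 + C\gamma K$, and for $\gamma$ small enough this closes at $K\le 3^{1/q}$. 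This is the general-$p$ analogue of Proposition X.d.8 of \cite{CS} and the content of Theorem 6.1 of \cite{Sa}.

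For (ii), the absence of $c_0$ is immediate: any normalized block $(x_j)$ of $(e_n)$ satisfies $\|\sum_{j=1}^n x_j\|\ge \gamma n^{1/p}\to\infty$. For $\ell_r$ with $r\neq p$, if $(x_j)$ were $C$-equivalent to the $\ell_r$-basis, then $\|\sum_{j=1}^n x_j\|$ would simultaneously be $\asymp n^{1/r}$ and $\asymp n^{1/p}$, forcing $r=p$; the general embedding problem reduces to the block case by the standard extraction argument using $1$-unconditionality of $(e_n)$. The borderline case $r=p$ is genuinely more delicate: it combines Proposition~\ref{p1} (which forces $(e_n)$ to be dominated by the putative $\ell_p$-equivalent block) with a second application of the recursive norm formula to a further block basis whose $\ell_\infty$-norms tend to zero, yielding a contradiction for $\gamma$ small. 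Finally, since $(e_n)$ is $1$-unconditional and $Ti(p,\gamma)$ contains neither $c_0$ nor $\ell^1$ (the latter since the upper bound of (i) gives $\|\sum_{j=1}^n x_j\|=O(n^{1/p})=o(n)$), $Ti(p,\gamma)$ is reflexive.

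The main technical obstacle is the upper bound of (i): the combinatorial bookkeeping of how the partition $E_1<\cdots<E_m$ interacts with the block supports, and the quantitative use of the smallness of $\gamma$ to close the induction at the specific constant $3^{1/q}$. Once (i) is in hand, (ii) reduces essentially to growth-rate comparisons, with the single remaining delicacy being the $r=p$ case, which reuses the recursive structure of the norm a second time.
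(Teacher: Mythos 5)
The paper offers no proof of this proposition: it is quoted from \cite{CS} (where it is proved for $Ti(2,\gamma)$) and from Theorem 6.1 of \cite{Sa}, so the comparison here is against those sources rather than against an argument in the text. Your lower bound in (i) is correct and is the standard one-line application of the implicit formula with $E_j=\supp(x_j)$. Your upper bound in (i) is a plausible sketch of the usual Tsirelson-type partition argument, but note two points. First, the reference result (Lemma X.d.4 of \cite{CS}, reproduced here as Proposition \ref{p3}) proves the stronger inequality $\|\sum_j x_j\|\le 3^{1/q}(\sum_j\|x_j\|^p)^{1/p}$ for not-necessarily-normalized consecutive blocks, which is what the rest of the paper actually uses (Lemma \ref{l4}); your normalized version would not suffice for that. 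Second, your induction on $n$ is self-referential: a single straddling set $E_i$ may meet all $n$ blocks, so the straddling term is controlled by $K_n$ itself rather than by $K_{n'}$ with $n'<n$. The recursion $K_n\le 1+C\gamma\max_{n'\le n}K_{n'}$ still closes (each $K_n$ is finite a priori since $K_n\le n^{1/q}$), but as written your inductive step is not valid; the cleaner route, and the one in \cite{CS}, is to induct on the level $m$ of the recursive norms $\|\cdot\|_m$.

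The genuine gap is in (ii). Your growth-rate comparison correctly eliminates $c_0$, $\ell_1$, and $\ell_r$ for $r\ne p$, and the reflexivity deduction via James's theorem is fine. But the entire substance of (ii) is the exclusion of $\ell_p$ itself, and for that you offer only the assertion that it ``combines Proposition \ref{p1} with a second application of the recursive norm formula to a further block basis whose $\ell_\infty$-norms tend to zero.'' This is not an argument: every normalized block basis of $Ti(p,\gamma)$ satisfies $\gamma n^{1/p}\le\|\sum_{j=1}^n u_j\|\le 3^{1/q}n^{1/p}$, so no equal-coefficient or growth-rate computation can distinguish the space from $\ell_p$, and it is not explained what contradiction the small-$\ell_\infty$ block basis is supposed to produce (in the dual such blocks are asymptotic-$\ell_q$, which is consistent with containing $\ell_q$; the contradiction in the paper's Theorem \ref{t10} comes from subsymmetry plus Lemma \ref{l8}, which itself \emph{presupposes} Proposition \ref{p2}(ii)). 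The non-containment of $\ell_p$ is precisely the nontrivial theorem of \cite{CS} and \cite{Sa} being cited, and your proposal does not prove it.
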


\noindent
\textbf{Remark.}
We shall apply the above proposition for $\gamma<3^{-\frac1q}$.

\medskip

Actually, we need the more general version of the right-hand inequality of $(i)$, which is the $p$-analogue of Lemma X.d.4 \cite{CS}.

\begin{prop}\label{p3}
If $0<\gamma<3^{-\frac1q}$ and $(x_j)_{j=1}^n$ are block vectors in $Ti(p,\gamma)$ with consecutive supports, $n\in\N$, then
$$
\left\|\sum_{j=1}^n x_j\right\|\le 3^{\frac1q}\left(\sum_{j=1}^n\|x_j\|^p\right)^{\frac1p}.
$$
\end{prop}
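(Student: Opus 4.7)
\medskip
\noindent\emph{Proof plan.} The strategy is to adapt the argument of Lemma~X.d.4 of \cite{CS} (which treats the $p=2$ case) to general $p$, using induction on the tree complexity of a norming functional. Recall that $\|y\|=\sup_{f\in\mathcal F}|f(y)|$ for $y\in c_{00}$, where the norming set $\mathcal F\subseteq B_{Ti^*(p,\gamma)}$ is built iteratively from $\mathcal F_0=\{\pm e_n^*:n\ge 1\}$ by adjoining, at each stage, functionals of the form $\tfrac{\gamma}{m^{1/q}}(f_1+\cdots+f_m)$, where $m\in\N$, $f_i\in\mathcal F_k$, and $\supp(f_1)<\cdots<\supp(f_m)$. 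Each $f\in\mathcal F=\bigcup_k\mathcal F_k$ carries a natural tree height, and the goal is to prove by induction on this height that
\[
\bigl|f\bigl(\textstyle\sum_{j=1}^n x_j\bigr)\bigr|\le 3^{1/q}\bigl(\textstyle\sum_{j=1}^n\|x_j\|^p\bigr)^{1/p}
\]
for every $f\in\mathcal F$ and every successively supported finite block sequence $x_1<\cdots<x_n$; the proposition follows on taking the supremum over $f$. The base case $f=\pm e_n^*$ is immediate.

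For the inductive step, write $f=\tfrac{\gamma}{m^{1/q}}\sum_{i=1}^m f_i$ with $E_i:=\supp(f_i)$ and $E_1<\cdots<E_m$, and partition $\{1,\ldots,m\}$ into Group~$A$ (those $i$ with $E_i\subseteq\supp(x_{j(i)})$ for some unique $j(i)$) and Group~$B$ (those $i$ with $E_i$ meeting the supports of several $x_j$'s). For $i\in A$ I use the non-inductive bound $|f_i(\sum_jx_j)|\le\|E_ix_{j(i)}\|$; for $i\in B$ I apply the inductive hypothesis to $f_i$ with the consecutive restrictions $(E_ix_j)_{j\in J_i}$, where $J_i=\{j:E_ix_j\neq 0\}$, to get $|f_i(\sum_jx_j)|\le 3^{1/q}(\sum_{j\in J_i}\|x_j\|^p)^{1/p}$.

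The decisive combinatorial step is: disjointness of the $E_i$'s forces, for each $j$, either that $\supp(x_j)\subseteq E_i$ for one unique $i\in B$ (in which case no other $E_{i'}$ meets $\supp(x_j)$) or else at most two indices $i\in B$ satisfy $j\in J_i$ (one whose $E_i$ extends to the left of $\supp(x_j)$ and one to the right). Consequently $\sum_{i\in B}\sum_{j\in J_i}\|x_j\|^p\le 2\sum_j\|x_j\|^p$. For Group~$A$, the $E_i$'s with $j(i)=j$ (fixed $j$) form an admissible splitting of $\supp(x_j)$, so the defining recursion of the norm yields $\sum_{i\in A,\,j(i)=j}\|E_ix_j\|\le s_j^{1/q}\|x_j\|/\gamma$, where $s_j$ is their count. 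Two applications of H\"older's inequality (separately on the $i$-sums over $A$ and $B$) combine to
\[
\bigl|f(\textstyle\sum_jx_j)\bigr|\le\bigl[(|A|/m)^{1/q}+\gamma\cdot 3^{1/q}\cdot 2^{1/p}(|B|/m)^{1/q}\bigr]\bigl(\textstyle\sum_j\|x_j\|^p\bigr)^{1/p}.
\]
An elementary optimization in $\alpha:=|A|/m$, subject to $\alpha+|B|/m=1$, shows the bracket is bounded by $3^{1/q}$ under the hypothesis $\gamma<3^{-1/q}$, closing the induction.

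The main obstacle is the combinatorial count in Group~$B$: the disjointness of the $E_i$'s must be used to show each $x_j$ is ``touched'' by at most two boundary functionals, which is what produces the factor $2^{1/p}$ in the H\"older application and in turn matches the hypothesis $\gamma<3^{-1/q}$ with the conclusion $3^{1/q}$. The remaining estimation is routine once this count is in place.
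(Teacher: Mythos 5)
The paper does not actually prove Proposition~\ref{p3}: it is simply quoted as the $p$-analogue of Lemma X.d.4 of \cite{CS}, so there is no in-paper argument to measure yours against. Your architecture --- induction on the complexity of a norming functional, the split of $\{1,\dots,m\}$ into non-straddling indices $A$ and straddling indices $B$, the admissible-splitting bound $\sum_{i\in A,\,j(i)=j}\|E_ix_j\|\le s_j^{1/q}\|x_j\|/\gamma$, and the observation that each $j$ lies in $J_i$ for at most two $i\in B$ --- is sensible, and each of those individual steps checks out.

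The gap is in the closing optimization, which is false for $1<p<2$. By H\"older's inequality with exponents $p,q$ the supremum of $\alpha^{1/q}+\lambda(1-\alpha)^{1/q}$ over $\alpha\in[0,1]$ equals $(1+\lambda^p)^{1/p}$ and is attained; with $\lambda=\gamma\,3^{1/q}2^{1/p}$ your bracket can therefore be as large as $(1+2\gamma^p3^{p-1})^{1/p}$, and this is at most $3^{1/q}=3^{(p-1)/p}$ exactly when $1+2\gamma^p3^{p-1}\le 3^{p-1}$, i.e.\ $\gamma^p\le\tfrac12(1-3^{1-p})$. The hypothesis $\gamma<3^{-1/q}$ only gives $\gamma^p<3^{1-p}$, and $3^{1-p}\le\tfrac12(1-3^{1-p})$ holds if and only if $p\ge2$. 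So your induction closes for $p\ge2$ (with equality in the critical case $p=2$, $\gamma=3^{-1/2}$), but for $1<p<2$ and $\gamma$ near $3^{-1/q}$ the bracket tends to $3^{1/p}>3^{1/q}$; in fact for $1<p<1+\log_32$ already the pure Group-$B$ configuration $\alpha=0$ yields $\gamma\,3^{1/q}2^{1/p}>3^{1/q}$ for admissible $\gamma$. The culprit is the factor $2^{1/p}$ coming from crediting each boundary block a full $\|x_j\|^p$ in $\sum_{i\in B}\sum_{j\in J_i}\|x_j\|^p\le 2\sum_j\|x_j\|^p$: for $p<2$ this is too expensive relative to the target constant. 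To cover all $1<p<\infty$ you would need a sharper accounting of the boundary pieces $E_ix_{j_1(i)}$, $E_ix_{j_2(i)}$ (for instance, folding them into the admissible splittings of the corresponding $x_j$ together with the Group-$A$ pieces rather than bounding them crudely by $\|x_j\|$), or a differently weighted inductive statement; as written, the last step does not go through.
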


As an immediate corollary we obtain the following

\begin{lemma}\label{l4}
Let $0<\gamma<3^{-\frac1q}$. Let $(x^*_j)$ be a normalized block basis of $(e_j^*)$ in the dual space $Ti^*(\gamma,p)$. Then for every $n$ and every choice of real numbers $(a_j)_{j=1}^n$, we have
$$
\left\|\sum_{j=1}^n a_jx_j^*\right\|\ge \frac1{3^{\frac1q}}\left(\sum_{j=1}^n|a_j|^q\right)^{\frac1q}.
$$
\end{lemma}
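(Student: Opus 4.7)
My plan for Lemma \ref{l4} is the standard Hahn--Banach/H\"older duality argument: the lemma is essentially dual to Proposition \ref{p3}. Given scalars $(a_j)_{j=1}^n$, I want to produce a test vector $x\in Ti(p,\gamma)$ with $\|x\|\le 3^{1/q}$ realizing the pairing $\langle\sum_j a_jx_j^*,x\rangle\approx (\sum_j |a_j|^q)^{1/q}$.

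First, fix $\epsilon>0$ and, for each $j$, choose $y_j\in Ti(p,\gamma)$ with $\|y_j\|\le 1$ and $\langle x_j^*,y_j\rangle>1-\epsilon$. The canonical basis of $Ti(p,\gamma)$ is $1$-unconditional, so the coordinate projection onto $\supp(x_j^*)$ has norm one and preserves $\langle x_j^*,\cdot\rangle$; replacing $y_j$ by this projection, I may assume $\supp(y_j)\subseteq \supp(x_j^*)$. In particular the $y_j$ have consecutive, pairwise disjoint supports and $\langle x_i^*,y_j\rangle=0$ for $i\ne j$.

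Next, set $A=\sum_{i=1}^n |a_i|^q$ and $c_j=\sgn(a_j)|a_j|^{q-1}/A^{1/p}$ (the H\"older extremizers), so that $\sum_j |c_j|^p=1$ and $\sum_j a_jc_j=A^{1/q}$. Let $x=\sum_{j=1}^n c_jy_j$. Applying Proposition \ref{p3} to the block vectors $(c_jy_j)_{j=1}^n$, which have consecutive supports, yields
$$
\|x\|\le 3^{1/q}\Bigl(\sum_{j=1}^n |c_j|^p\|y_j\|^p\Bigr)^{1/p}\le 3^{1/q}.
$$
The pairing then telescopes to $\langle\sum_j a_jx_j^*,x\rangle=\sum_j a_jc_j\langle x_j^*,y_j\rangle\ge (1-\epsilon)A^{1/q}$, whence $\|\sum_j a_jx_j^*\|\ge (1-\epsilon)A^{1/q}/3^{1/q}$. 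Sending $\epsilon\to 0$ completes the proof. The argument is entirely routine; the only point requiring a bit of care is the support control in the first step, and no real obstacle is expected.
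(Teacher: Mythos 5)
Your proposal is correct and follows essentially the same duality argument as the paper: both pair $\sum_j a_jx_j^*$ against a block test vector with H\"older-conjugate coefficients supported inside the $\supp(x_j^*)$, and both bound that test vector's norm by Proposition \ref{p3}. The only cosmetic difference is that the paper uses exact norming vectors $x_j^*(x_j)=1$ instead of your $(1-\epsilon)$-approximate ones, and writes the test vector as $\sum_j a_j^{q/p}x_j$ without normalizing by $A^{1/p}$.
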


\begin{proof}
For any $1\le j\le n$ choose an $x_j\in Ti(\gamma,p)$ with $\|x_j\|=1$ and $x_j^*(x_j)=1$. Let $(a_j)_{j=1}^n$ be a set of real numbers. By 1-unconditionality we may assume that $a_j\ge 0$ and $\supp x_j\subseteq \supp x^*_j$. Then by duality,
\begin{eqnarray*}
\sum_{j=1}^na_j^q&=& \sum_{j=1}^n a_jx_j^*(a_j^{\frac qp}x_j)=
\left(\sum_{j=1}^n a_j x_j^*\right)\left(\sum_{j=1}^n a_j^{\frac qp}x_j\right)
\\
&\le& \left\|\sum_{j=1}^n a_jx_j^*\right\| \left\|\sum_{j=1}^n a_j^{\frac qp}x_j\right\|
\\
&\le& \left\|\sum_{j=1}^n a_jx_j^*\right\| 3^{\frac1q}\left(\sum_{j=1}^n a_j^q\right)^{\frac1p},
\end{eqnarray*}
which gives the needed inequality.
\end{proof}

\begin{prop}[\cite{Sa}]\label{p5}
Let $1<p<\infty$ and let $\gamma>0$ be sufficiently small. Then $Ti(p,\gamma)$ contains no symmetric basic sequence.
\end{prop}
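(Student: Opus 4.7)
The plan is to argue by contradiction: if $(y_n)$ is a normalized symmetric basic sequence in $Ti(p,\gamma)$ with symmetric constant $K$, I would show $(y_n)$ must be equivalent to the unit vector basis of $\ell_p$, which contradicts Proposition \ref{p2}(ii).

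First I would reduce to the case that $(y_n)$ is a normalized block basis of $(e_n)$. Reflexivity of $Ti(p,\gamma)$ (Proposition \ref{p2}(ii)) yields a weakly convergent subsequence, and a standard biorthogonality argument inside $[y_n]$ forces the weak limit to vanish, so $(y_n)$ may be assumed weakly null after passing to a subsequence. Bessaga--Pelczynski then provides a further subsequence equivalent to a block basis of $(e_n)$, and symmetry makes the original $(y_n)$ $K$-equivalent to this block basis.

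Next I would assemble the $\ell_p$-type bounds. Proposition \ref{p3} applied to $a_j y_j$ gives the upper estimate $\|\sum a_j y_j\| \le 3^{1/q}(\sum |a_j|^p)^{1/p}$, so $(y_n)$ is dominated by the unit vector basis of $\ell_p$. Proposition \ref{p2}(i) combined with symmetry gives $\|\sum_{j \in F} y_j\| \approx |F|^{1/p}$ for every finite $F \subset \N$. The crucial step is to upgrade this characteristic-function lower bound to a matching lower $\ell_p$-estimate. My strategy: using symmetry, pass to a subsequence whose supports lie in arbitrarily widely separated blocks of $\N$, and apply the implicit Tirilman norm formula to $a = \sum a_j y_j$ with partitions $E_j = \supp(y_j)$ over an $m$-subset $F$ of indices, obtaining
\[
\|a\| \ge \gamma\,m^{-1/q}\sum_{j \in F}|a_j|.
\]
Choosing $F$ to index the top $m$ values of $|a_j|$ produces a weak-$\ell_p$ bound. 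To reach a genuine $\ell_p$ bound I would stratify $(|a_j|)$ into dyadic layers $L_k = \{j : 2^{-k-1} < |a_j| \le 2^{-k}\}$, apply the characteristic-function lower bound on each $L_k$, and aggregate the contributions by choosing consecutive partitions in the implicit norm aligned with the supports of successive layers. If the lower $\ell_p$-estimate can be established, then $(y_n)$ is equivalent to the unit vector basis of $\ell_p$, contradicting Proposition \ref{p2}(ii).

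The main obstacle is the aggregation in the last step. A symmetric basis satisfying both an upper $\ell_p$-estimate and the fundamental-function bound $n^{1/p}$ need not be $\ell_p$-equivalent in general: Lorentz sequence spaces $d(w,p)$ are classical symmetric examples squeezed between $\ell_p$ and weak-$\ell_p$. Ruling out such Lorentz-type behavior will require using the full flexibility of arbitrary consecutive partitions $E_1 < \cdots < E_n$ in the Tirilman norm rather than only support-aligned ones, leveraging this freedom across the dyadic layers to recover the full $\ell_p$-sum of the layer fundamental-function values rather than just their $\ell_\infty$-maximum.
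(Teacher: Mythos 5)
First, note that the paper does not prove this proposition at all: it is quoted from \cite{Sa} (with a remark that the disjoint finite representability of $c_0$ from \cite{JKO} gives an alternative proof that $(e_j)$ itself is not symmetric). So your attempt cannot be measured against an argument in the text; it has to stand on its own, and it does not. The reductions in your first two steps are fine (passing to a weakly null subsequence, Bessaga--Pe{\l}czy\'nski, the upper $\ell_p$ estimate from Proposition \ref{p3}, and the fundamental-function bound $\|\sum_{j\in F}y_j\|\sim |F|^{1/p}$ from Proposition \ref{p2}(i) together with symmetry). But the entire content of the proposition lies in the step you label ``the main obstacle'' and then describe only as a strategy. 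What you actually establish is $\|a\|\ge\gamma\sup_m m^{-1/q}\sum_{k=1}^m a_k^*$, i.e.\ domination of the Marcinkiewicz ($\ell_{p,\infty}$) norm, and this cannot be upgraded to a lower $\ell_p$ estimate from the ingredients you invoke: the separable Marcinkiewicz sequence space with norm $\sup_m m^{-1/q}\sum_{k=1}^m a_k^*$ has a symmetric basis, satisfies the upper $\ell_p$ estimate for disjoint blocks, has fundamental function exactly $n^{1/p}$, and is not equivalent to $\ell_p$. So (a) upper $\ell_p$ estimate, (b) fundamental function $n^{1/p}$, (c) the weak-$\ell_p$ lower bound, and (d) symmetry are jointly consistent with non-$\ell_p$ behavior; some genuinely new input is required.

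The dyadic-layer aggregation you sketch also fails quantitatively. After using symmetry to make the layers $L_1<\cdots<L_K$ consecutive and applying the partition $E_k=\bigcup_{j\in L_k}\supp(y_j)$ together with the bound $\|\sum_{j\in L_k}a_jy_j\|\gtrsim 2^{-k}|L_k|^{1/p}=:\phi_k$, you obtain $\|a\|\gtrsim K^{-1/q}\sum_k\phi_k$, whereas the target is $\bigl(\sum_k\phi_k^p\bigr)^{1/p}$; H\"older gives $K^{-1/q}\sum_k\phi_k\le\bigl(\sum_k\phi_k^p\bigr)^{1/p}$, so your estimate sits on the wrong side and, when a single layer dominates, loses an unbounded factor $K^{1/q}$ --- you recover only the weak-$\ell_p$ bound again. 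The known arguments do not proceed by proving a lower $\ell_p$ estimate; they derive a contradiction from two incompatible arrangements of the same multiset of coefficients: consecutive groupings, which Proposition \ref{p2}(i) forces to be large, versus spread-out arrangements, which a $c_0$-type upper estimate (the phenomenon from \cite{JKO}, or Tzafriri's original equal-norm-type computation) forces to be small. That tension between the interval structure of the implicit norm and permutation invariance is the heart of the matter, and it is exactly the part your proposal leaves open.
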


\noindent
\textbf{Remark.}
It was proved in \cite{JKO} that $c_0$ is finitely representable in $Ti(2,\frac12)$ (disjointly w.r.t. $(e_j)$) which provides an alternative proof that $(e_j)$ is not symmetric.

\medskip

\begin{lemma}\label{l6}
Let $(e_i)$ be a $1$-unconditional basis of a reflexive Banach space
$X$ which is $K$-dominated by its normalized block bases, where $K\ge
1$. Then $(e_i^*)$ $K$-dominates all normalized block bases of $(e_i^*)$ in the dual space $X^*$.
\end{lemma}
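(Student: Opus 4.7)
The plan is a standard duality argument: pair $\sum a_n y_n^*$ against a near-optimal functional $z\in X$ and exploit the block structure to reduce to the hypothesis applied to a normalized block basis of $(e_i)$.

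First I would fix a normalized block basis $(y_n^*)$ of $(e_i^*)$ with consecutive supports $E_n$ and fix scalars $(a_n)$. Using reflexivity of $X$, choose $z\in X$ with $\|z\|\le 1$ and $(\sum_n a_n y_n^*)(z) = \|\sum_n a_n y_n^*\|$. Decompose $z$ along the blocks: set $z_n = P_{E_n} z$, where $P_{E_n}$ is the basis projection onto coordinates in $E_n$. Since $(e_i)$ is $1$-unconditional, $\|\sum_n z_n\| = \|P_{\bigcup_n E_n} z\|\le \|z\|\le 1$. For indices $n$ with $z_n\neq 0$, set $\tilde x_n = z_n/\|z_n\|$; this is a normalized block basis of $(e_i)$. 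Since $\supp(y_m^*)$ and $\supp(z_n)$ are disjoint for $m\neq n$, we have $y_n^*(z) = y_n^*(z_n) = \|z_n\|\,\beta_n$ with $|\beta_n|\le \|y_n^*\|=1$.

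Next I would pair things up and apply the hypothesis. Writing the main quantity as a duality pairing in $[(e_n)]$ versus $[(e_n^*)]$,
\[
\Bigl(\sum_n a_n y_n^*\Bigr)(z) \;=\; \sum_n a_n\beta_n \|z_n\| \;=\; \Bigl\langle \sum_n a_n\beta_n e_n^{*},\; \sum_n \|z_n\|\,e_n\Bigr\rangle,
\]
so
\[
\Bigl|\Bigl(\sum_n a_n y_n^*\Bigr)(z)\Bigr| \;\le\; \Bigl\|\sum_n a_n\beta_n e_n^{*}\Bigr\|\,\Bigl\|\sum_n \|z_n\|\,e_n\Bigr\|.
\]
By $1$-unconditionality of $(e_n^{*})$ and $|\beta_n|\le 1$, the first factor is at most $\|\sum_n a_n e_n^{*}\|$. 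By the hypothesis applied to the normalized block basis $(\tilde x_n)$,
\[
\Bigl\|\sum_n \|z_n\|\,e_n\Bigr\| \;\le\; K \Bigl\|\sum_n \|z_n\|\,\tilde x_n\Bigr\| \;=\; K\Bigl\|\sum_n z_n\Bigr\| \;\le\; K.
\]
Combining these two bounds gives $\|\sum_n a_n y_n^*\|\le K\|\sum_n a_n e_n^{*}\|$, which is the required domination.

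The content of the argument is essentially just organizing two applications of $1$-unconditionality together with a single application of the block-domination hypothesis; no estimate is delicate. The main obstacle is bookkeeping: one must be careful to (i) restrict to indices $n$ with $z_n\neq 0$ so that $(\tilde x_n)$ really is a normalized block basis, (ii) use reflexivity so that $(e_n^{*})$ is a Schauder basis of $X^{*}$ and the norm-attaining $z$ exists (an $\varepsilon$-approximation also works if one prefers to avoid reflexivity at this step), and (iii) verify that the pairing identity $\sum_n a_n\beta_n\|z_n\|$ really equals $\langle \sum a_n\beta_n e_n^{*},\sum \|z_n\|e_n\rangle$, which follows from the biorthogonality of $(e_n)$ and $(e_n^{*})$.
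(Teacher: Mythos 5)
Your proof is correct and follows essentially the same duality argument as the paper: choose a norming element $z$, restrict it to the block supports, normalize the nonzero pieces into a block basis of $(e_i)$, and apply the domination hypothesis to bound $\bigl\|\sum_n \|z_n\| e_n\bigr\|$ by $K$. The only cosmetic difference is that the paper first reduces to nonnegative coefficients via $1$-unconditionality and uses $x_n^*(z_n)\le 1$ directly, whereas you carry the factors $\beta_n$ and invoke $1$-unconditionality of $(e_n^*)$ at the end.
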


\begin{proof}
Let $(x_i^*)$ be a normalized block-basis of $(e_i^*)$ and let $(a_i)_{i=1}^n$, $n\in\N$, be an arbitrary set of real numbers. $(e_i^*)$ is also 1-unconditional, so we may assume that $a_i\ge 0$ for all $1\le i\le n$. Pick a norming element $w\in X$, $\|w\|=1$, $\left(\sum_{i=1}^n a_ix_i^*\right)(w)=\left\|\sum_{i=1}^n a_ix_i^*\right\|$. Denote $A_i=\supp(x_i^*)$.

The 1-unconditionality of $(e_i)$ allows us to assume that
$$
\supp(w)\subseteq \bigcup_{i=1}^n A_i.
$$
Let $w_i=w|_{A_i}$ be the restriction of $w$ to the set $A_i$. Denote $\|w_i\|=c_i$ and $B=\{1\le i\le n\colon c_i\ne 0\}$. By 1-unconditionality, $c_i\le 1$, $1\le i\le n$. For each $i\in B$, let $z_i=\frac{w_i}{c_i}$. Clearly $(z_i)_{i=1}^n$ is a normalized block-basis of $(e_i)_{i=1}^\infty$ and
$$
w=\sum_{i\in B} c_iz_i.
$$
Then,
\begin{eqnarray*}
\left\|\sum_{i=1}^n a_ix_i^*\right\|
&=&
\left(\sum_{i=1}^n a_ix_i^*\right)\left(\sum_{i\in B} c_iz_i\right)\\
&=&
\sum_{i\in B} a_ic_ix_i^*(z_i)\le \sum_{i\in B} a_ic_i\\
&=&
\left(\sum_{i\in B} a_ie_i^*\right)\left(\sum_{i\in B} c_ie_i\right)
\le
\left\|\sum_{i\in B} a_ie_i^*\right\|\cdot
\left\|\sum_{i\in B} c_ie_i\right\|
\end{eqnarray*}
By the $K$-domination,
$$
\left\|\sum_{i\in B} c_ie_i\right\|\le K \left\|\sum_{i\in B} c_iz_i\right\|=K.
$$
Thus,
$$
\left\|\sum_{i=1}^n a_ix_i^*\right\|\le K\left\|\sum_{i\in B} a_ie_i^*\right\| \le K\left\|\sum_{i=1}^n a_ie_i^*\right\|.
$$
\end{proof}

\begin{lemma}\label{l7}
For any $n$ and any sequence of normalized blocks $(x_j^*)_{j=1}^n$ of $(e_j^*)_{j=1}^\infty$ in $Ti^*(p,\gamma)$,
$$
\left\|\sum_{j=1}^n x_j^*\right\|\le \frac{n^{\frac1q}}{\gamma}.
$$
\end{lemma}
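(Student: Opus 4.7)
The plan is to run a duality argument in the spirit of Lemma \ref{l6}: pick a norming vector $w$ for $\sum_{j=1}^n x_j^*$ in $Ti(p,\gamma)$, slice $w$ along the supports of the blocks, and control the resulting scalar masses directly from the implicit defining inequality of the $Ti(p,\gamma)$-norm.

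Concretely, using the reflexivity of $Ti(p,\gamma)$ (Proposition \ref{p2}(ii)), I would choose $w\in Ti(p,\gamma)$ with $\|w\|=1$ and $\big(\sum_{j=1}^n x_j^*\big)(w)=\big\|\sum_{j=1}^n x_j^*\big\|$. Let $A_j=\supp(x_j^*)$; these form consecutive finite subsets $A_1<\cdots<A_n$ of $\N$, and the $1$-unconditionality of $(e_i)$ lets me assume $\supp(w)\subseteq\bigcup_{j=1}^n A_j$. Set $w_j=w|_{A_j}$ and $c_j=\|w_j\|$.

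The key step is to feed the partition $(A_j)$ into the supremum appearing in the defining formula for the $Ti(p,\gamma)$-norm applied to $w$, obtaining
$$1=\|w\|\ \ge\ \gamma\,\frac{\sum_{j=1}^n \|A_j w\|}{n^{1/q}}\ =\ \gamma\,\frac{\sum_{j=1}^n c_j}{n^{1/q}},$$
so that $\sum_{j=1}^n c_j\le n^{1/q}/\gamma$. Since $x_j^*$ is supported in $A_j$ and $\|x_j^*\|=1$, one has $x_j^*(w)=x_j^*(w_j)\le \|x_j^*\|\,\|w_j\|=c_j$, and summing these estimates yields the desired bound. There is really no obstacle here: the only point of care is recognizing that the block supports $(A_j)$ themselves furnish an admissible partition for the implicit inequality that defines the $Ti(p,\gamma)$-norm, after which everything reduces to routine pairing.
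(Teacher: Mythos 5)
Your argument is correct, but it takes a different route from the paper's. The paper proves Lemma \ref{l7} in two steps: first it invokes Lemma \ref{l6} together with Proposition \ref{p1} to get $\bigl\|\sum_{j=1}^n x_j^*\bigr\|\le\bigl\|\sum_{j=1}^n e_j^*\bigr\|$, and then it quotes from \cite{M} the fact that $\frac{\gamma}{n^{1/q}}\sum_{j=1}^n e_j^*$ lies in the unit ball of $Ti^*(p,\gamma)$. You instead argue directly: take a norming vector $w$, slice it along the consecutive supports $A_1<\cdots<A_n$, observe that this partition is admissible in the implicit definition of the $Ti(p,\gamma)$-norm so that $\sum_j\|w|_{A_j}\|\le n^{1/q}/\gamma$, and pair each $x_j^*$ against its own slice. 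This is sound: the supports of a block basis are indeed successive, the $1$-unconditionality reduction to $\supp(w)\subseteq\bigcup_j A_j$ is legitimate, and the pairing $x_j^*(w)=x_j^*(w|_{A_j})\le c_j$ closes the estimate. In effect you have fused the slicing argument of Lemma \ref{l6} (in the special case $a_i\equiv 1$) with a direct verification of the fact cited from \cite{M}, which makes your proof more self-contained --- it needs neither Lemma \ref{l6} nor Proposition \ref{p1} nor the external reference. One small remark: reflexivity is not needed to produce the norming vector $w$, since $\sum_{j=1}^n x_j^*$ is finitely supported and its norm is attained on a finite-dimensional coordinate subspace by compactness. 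What the paper's route buys in exchange is reuse of machinery already developed (Lemma \ref{l6} is needed elsewhere in any case) and a reduction to a single known estimate for $\bigl\|\sum_{j=1}^n e_j^*\bigr\|$; your route buys independence from those ingredients at the cost of re-deriving the slicing argument inline.
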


\begin{proof}
By the previous Lemma \ref{l6} and Proposition \ref{p1}, $(x_j^*)_{j=1}^n$ is 1-dominated by $(e_j^*)_{j=1}^n$, so
$$
\left\|\sum_{j=1}^n x_j^*\right\|\le \left\|\sum_{j=1}^n e_j^*\right\|.
$$
The vector $\frac\gamma{n^{\frac1q}}\sum_{j=1}^n e_j^*$ belongs to the unit ball of $Ti^*(p,\gamma)$, see e.g. \cite{M}, so $ \left\|\sum_{j=1}^n e_j^*\right\|\le\frac{n^{\frac1q}}\gamma$.
\end{proof}

\begin{lemma}\label{l8}
$Ti^*(p,\gamma)$ does not contain an isomorphic copy of $\ell_q$ $(\frac1p+\frac1q=1)$.
\end{lemma}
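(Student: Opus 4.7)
The plan is to derive a contradiction by producing an isomorphic copy of $\ell_p$ inside $Ti(p,\gamma)$, which is forbidden by Proposition \ref{p2}(ii). Suppose to the contrary that $\ell_q$ embeds isomorphically into $Ti^*(p,\gamma)$. Since $Ti(p,\gamma)$ is reflexive by Proposition \ref{p2}(ii), so is $Ti^*(p,\gamma)$; the images of the unit vectors of $\ell_q$ therefore form a bounded, seminormalized, weakly null sequence in $Ti^*(p,\gamma)$. By the Bessaga--Pe\l czy\'nski selection principle applied to the basis $(e_j^*)$, a subsequence is equivalent, via small perturbations, to a normalized block basis $(x_j^*)$ of $(e_j^*)$; by the symmetry of the $\ell_q$ basis, $(x_j^*)$ is itself equivalent to the unit vector basis of $\ell_q$, so for some constant $C>0$,
$$
\frac{1}{C}\left(\sum|a_j|^q\right)^{1/q}\le\left\|\sum a_j x_j^*\right\|\le C\left(\sum|a_j|^q\right)^{1/q}.
$$

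For each $j$, use Hahn--Banach to pick $x_j\in Ti(p,\gamma)$ with $\|x_j\|=1$ and $x_j^*(x_j)=1$, then replace $x_j$ by its restriction to $\supp(x_j^*)$. The $1$-unconditionality of $(e_n)$ ensures that this truncation does not increase the norm, while $x_j^*(x_j)=1$ is preserved since $x_j^*$ vanishes off $\supp(x_j^*)$; duality then forces $\|x_j\|=1$ as well. Thus $(x_j)$ is a normalized block basis of $(e_j)$ with exact biorthogonality $x_i^*(x_j)=\delta_{ij}$. The crucial duality step is: for nonnegative scalars $a_j,b_j$,
$$
\sum a_j b_j=\left(\sum b_j x_j^*\right)\left(\sum a_j x_j\right)\le\left\|\sum b_j x_j^*\right\|\cdot\left\|\sum a_j x_j\right\|\le C\left(\sum b_j^q\right)^{1/q}\left\|\sum a_j x_j\right\|.
$$
Taking the supremum over nonnegative $(b_j)$ with $\sum b_j^q=1$ yields the lower $\ell_p$ bound $\left(\sum a_j^p\right)^{1/p}\le C\left\|\sum a_j x_j\right\|$, which extends to signed $a_j$ by $1$-unconditionality.

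Finally, Proposition \ref{p3} supplies the matching upper $\ell_p$ estimate $\left\|\sum a_j x_j\right\|\le 3^{1/q}\left(\sum|a_j|^p\right)^{1/p}$. Combining the two estimates, $(x_j)$ is equivalent to the unit vector basis of $\ell_p$, so $\ell_p$ embeds isomorphically into $Ti(p,\gamma)$, contradicting Proposition \ref{p2}(ii). The two auxiliary steps—the Bessaga--Pe\l czy\'nski extraction and the construction of norming vectors supported inside $\supp(x_j^*)$—are standard; the heart of the argument is the duality identity that converts the upper $\ell_q$ bound on $(x_j^*)$ into a lower $\ell_p$ bound on $(x_j)$.
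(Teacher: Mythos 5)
Your proof is correct, but it reaches the contradiction by a different mechanism than the paper. The paper, after the same initial reduction (pass to a normalized block basis $(x_j^*)$ of $(e_j^*)$ that is $C$-equivalent to the unit vector basis of $\ell_q$, and choose norming vectors $x_j$ supported in $\supp(x_j^*)$), defines the projection $P(x^*)=\sum_j \langle P_j(x^*),x_j\rangle x_j^*$ and uses the upper $\ell_q$ estimate on $(x_j^*)$ together with Lemma~\ref{l4} to show $\|P\|\le 3^{1/q}C$; it then concludes that $\ell_q$ is \emph{complemented} in $Ti^*(p,\gamma)$, so by reflexivity and dualizing the projection, $\ell_p$ embeds in $Ti(p,\gamma)$, contradicting Proposition~\ref{p2}(ii). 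You instead work directly with the biorthogonal sequence $(x_j)$ in the predual: the H\"older-type duality identity converts the upper $\ell_q$ bound on $(x_j^*)$ into a lower $\ell_p$ bound on $(x_j)$, and Proposition~\ref{p3} supplies the upper $\ell_p$ bound, so $(x_j)$ is explicitly equivalent to the unit vector basis of $\ell_p$. The ingredients are essentially dual to one another (the paper's appeal to Lemma~\ref{l4} plays the role of your appeal to Proposition~\ref{p3}, from which Lemma~\ref{l4} is itself derived by the same duality computation you use), but your route is more elementary and self-contained: it avoids the complementation-and-dualization step and produces the forbidden $\ell_p$-sequence explicitly, whereas the paper's argument yields the formally stronger conclusion that the copy of $\ell_q$ would have to be complemented. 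Your justification of the initial reduction via reflexivity, weak nullity of the $\ell_q$ basis, and the Bessaga--Pe\l czy\'nski selection principle is also sound and fills in a step the paper passes over with ``without loss of generality.''
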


\begin{proof}
Assume the contrary. Without loss of generality we may assume that a
normalized block basis $(x_j^*)$ of $(e_j^*)$ is $C$-equivalent to the
unit vector basis of $\ell_q$. Denote $I_j=\supp(x_j^*)$. Choose norming
elements $x_j\in Ti(p,\gamma)$, $\|x_j\|=1$, $x^*_j(x_j)=1$. By the
1-unconditionality we may assume that $\supp(x_j)\subseteq
I_j\subset\N$ for all $j\in\N$. Clearly, $I_1<I_1<\cdots$ and denote by $P_j$ the projection on $I_j$.

Define the projection
$$
P(x^*)=\sum_{j=1}^\infty \langle P_j(x^*),x_j\rangle x_j^*.
$$
Then
\begin{eqnarray*}
\|P(x^*)\|&\le& C\left(\sum_{j=1}^\infty|\langle P_j(x^*),x_j\rangle|^q\right)^{\frac1q}\\
&\le& C\left(\sum_{j=1}^\infty\|P_j(x^*)\|^q\right)^{\frac1q}\stackrel{\rm Lemma~\ref{l4}}{\le} 3^{\frac1q}C\|x^*\|.
\end{eqnarray*}
Thus, the subspace generated by $(x^*_j)_{j=1}^\infty$ is complemented in $Ti^*(p,\gamma)$ which implies that $Ti(p,\gamma)$ contains an isomorphic copy of $\ell_p$, a contradiction.
\end{proof}

By Lemma \ref{l4} and \ref{l7} for all $n$ and all normalized block sequences $(u_i)_{i=1}^n$ in $Ti^*(p,\gamma)$ we have $\|\sum_{i=1}^nu_i\|\stackrel{K}\sim n^{1/q}$ for some $K$. In \cite{JKO} it was shown that spaces with such a property are saturated by asymptotic-$\ell_q$ sequences. An inspection of their proof (of Theorem 3.7) shows that any block sequence $(x_i)$ with $\|x_i\|_{\infty}\to 0$ is asymptotic-$\ell_q$. Thus the next Proposition follows from the proof of Theorem 3.7 in \cite{JKO}. We reproduce the proof for completeness, which is slightly easier in our case.

\begin{prop}\label{l9}
Let $1<p<\infty$ and $0<\gamma<3^{-1/q}$. Every normalized block sequence $(x_i)_{i=1}^\infty$ in $Ti^*(p,\gamma)$ satisfying $\|x_i\|_\infty \to 0$ is an asymptotic $\ell_q$ basic sequence where $\frac1p+\frac1q=1$.
\end{prop}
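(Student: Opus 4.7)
The lower asymptotic $\ell_q$ inequality is immediate from Lemma~\ref{l4}, which applies to any normalized block basis of $(e_j^*)$---in particular to any normalized block basis $(y_j)$ of $(x_i)$---and yields $\|\sum a_j y_j\| \ge (\sum |a_j|^q)^{1/q}/3^{1/q}$ with no flatness assumption. The entire content of the proposition is the matching upper estimate.

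Given $m$, I would take $M$ so large that $\|x_i\|_\infty \le \delta=\delta(m)$ for $i \ge M$, where $\delta$ is chosen small below. A normalized block $y = \sum_i b_i x_i$ of $(x_i)_{i\ge M}$ automatically satisfies $\max_i|b_i| \le 3^{1/q}$ by Lemma~\ref{l4}, hence $\|y\|_\infty \le 3^{1/q}\delta$. Thus the task reduces to showing: for normalized blocks $(y_j)_{j=1}^m$ of $(e_k^*)$ with $\|y_j\|_\infty \le \eta$ (with $\eta=\eta(m)$ a sufficiently small quantity), one has $\|\sum a_j y_j\| \le C(\sum|a_j|^q)^{1/q}$ with $C$ independent of $m$. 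I would compute $\|\sum a_j y_j\|$ by duality: norm it by a unit vector $w \in Ti(p,\gamma)$; by $1$-unconditionality assume $a_j, w \ge 0$ and $\supp(w) \subseteq \bigcup_j \supp(y_j)$, and set $w_j = w|_{\supp(y_j)}$. Then
$$
\Bigl\|\sum a_j y_j\Bigr\| = \sum_j a_j\, y_j(w_j) \le \eta \sum_j a_j \|w_j\|_1.
$$
A direct H\"older step on $\sum a_j\|w_j\|$ would demand a lower $p$-estimate for $w$, which fails in $Ti(p,\gamma)$; the flatness factor $\eta$ is needed to compensate.

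The way out, following \cite{JKO}, is to partition the indices dyadically into level sets $L_k = \{j : 2^{-k-1} < a_j \le 2^{-k}\}$ with $|L_k|=n_k$, and to bound each partial sum by Lemma~\ref{l7}: $\|\sum_{j\in L_k} a_j y_j\| \le 2^{-k} n_k^{1/q}/\gamma$. The count constraint $\sum_k n_k 2^{-kq} \le 2^q \sum_j a_j^q$ controls the level sizes. The main obstacle I foresee is recombining these dyadic levels into an $\ell_q$ sum: a naive triangle bound gives an $\ell_1$-combination $\sum_k 2^{-k} n_k^{1/q}$ that can exceed $(\sum a_j^q)^{1/q}$ by a factor growing (polynomially or logarithmically) in $m$. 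Overcoming this requires viewing $z_k = \sum_{j\in L_k} a_j y_j$ as itself a block in $Ti^*(p,\gamma)$ and reapplying Lemma~\ref{l7} at the outer scale, coupled with Proposition~\ref{p3} in the predual $Ti(p,\gamma)$ to limit how the mass of the norming $w$ can distribute across the $w_j$'s of a given dyadic level. The telescoping of these estimates is exactly the calculation of \cite[Theorem 3.7]{JKO}, simplified in the present setting by the clean $p$-estimate of Proposition~\ref{p3}, and yields the desired $\ell_q$-upper bound with a constant depending only on $p$ and $\gamma$.
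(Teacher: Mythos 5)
Your lower bound is correct, and you have rightly located the crux---the upper $\ell_q$ estimate---and the reason the flatness hypothesis is needed. But the upper-bound argument has a genuine gap exactly at the point you flag and do not resolve. With the dyadic level sets $L_k=\{j: 2^{-k-1}<a_j\le 2^{-k}\}$, Lemma~\ref{l7} gives $\bigl\|\sum_{j\in L_k}a_jy_j\bigr\|\le \gamma^{-1}2^{-k}n_k^{1/q}\le 2\gamma^{-1}\bigl(\sum_j a_j^q\bigr)^{1/q}$ for each single level, but the number $K$ of nonempty levels is of order $\log(1/\varepsilon)$ after discarding coefficients below $\varepsilon(m)$, and neither the triangle inequality nor your proposed outer application of Lemma~\ref{l7} to the vectors $z_k$ removes this factor: normalizing the $z_k$ and applying Lemma~\ref{l7} again only yields $\max_k\|z_k\|\cdot K^{1/q}\gamma^{-1}$, so the loss $K^{1/q}$ persists and the constant is no longer independent of $m$. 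Proposition~\ref{p3} lives in the predual and does not supply the lower $p$-estimate on the norming vector $w$ that you yourself note is unavailable. The recombination step is thus not carried out; it is deferred to ``the calculation of \cite{JKO}'', which is precisely the content of the proposition.

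The paper resolves this by an equal-norm blocking of the \emph{vectors} rather than a dyadic splitting of the \emph{coefficients}. Fix a scale $\delta=\delta(m)$ and use $\|x_i\|_\infty<\delta'$ to cut each $a_iy_i$ into consecutive pieces $y_{i,1},\dots,y_{i,n_i+1}$ with $\delta\le\|y_{i,j}\|<\delta+\delta'$ (the last piece smaller). Lemma~\ref{l4} applied to these pieces counts them, $n_i\le 3|a_i|^q/\delta^q$, so the total number $N=\sum n_i$ satisfies $\delta^qN\le 3\sum|a_i|^q$. Lemma~\ref{l7} is then applied \emph{once}, to the entire family of $N$ pieces of essentially equal norm $\delta$, giving $\bigl\|\sum a_iy_i\bigr\|\le \gamma^{-1}\delta N^{1/q}+(\text{small errors})\le 3^{1/q}\gamma^{-1}\bigl(\sum|a_i|^q\bigr)^{1/q}+\cdots$, with the coefficients $|a_i|<\varepsilon$ handled by the trivial bound $m\varepsilon$. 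Because all pieces enter a single application of Lemma~\ref{l7}, no recombination over scales arises and no logarithmic loss appears. If you replace your dyadic decomposition by this equal-norm blocking, your outline closes.
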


\begin{proof}

Let $m\in \N, m\ge 2$. Choose $\varepsilon,\delta>0$, and $\delta'$ satisfy 

\begin{equation}\label{ep-delta}
0<\varepsilon<\frac1{4m 3^{1/q}}, \quad  \delta=\frac{\varepsilon}{6\gamma^{-1}m}, \quad 0<\delta'<\frac{\delta^{q+1}}{\gamma^{-q}m}.
\end{equation}

Let $M\in\N$ be such that $\|x_i\|_{\infty}<\delta'$ for all $i\ge M$. Let $(y_i)_{i=1}^m$ be a normalized block basis of $(x_i)_{i\ge M}$. We will show that for all scalars $(a_i)_{i=1}^m$ with $\sum_{i=1}^m |a_i|^q=1$ we have

\begin{equation}\label{equi}
\frac1{3^{1/q}}\le\left\|\sum_{i=1}^m a_iy_i\right\|\le 3^{q+1}\gamma^{-q}.
\end{equation}

Fix $(a_i)_{i=1}^m$. The left hand side inequality holds for all normalized block vectors and was shown in Lemma \ref{l4}.

For each $i$, write $a_iy_i=\sum_{j=1}^{n_i+1}y_{i,j}$ where $y_{i,j}$'s are successive blocks with $\delta\le\|y_{i,j}\|<\delta+\delta'$ and $\|y_{i, n_i+1}\|<\delta$. Then by Lemma \ref{l4}
$$|a_i|=\|a_iy_i\|\ge 3^{-1/q}\Big(\sum_{i=1}^{n_i+1}\|y_{i,j}\|^q\Big)^{1/q}\ge 3^{-1/q} \delta n^{1/q}_i.$$
Thus for all $1\le i\le m$, 
\begin{equation}\label{n_i}
n_i\le \frac{3|a_i|^q}{\delta^q}.
\end{equation}

Moreover, by shrinking each $y_{i,j}$ to have norm exactly $\delta$ at a cost of $\delta'$ we have by Lemma \ref{l7} that
$$\|a_iy_i\|\le  \gamma^{-1}\delta n^{1/q}_i+n_i\delta'+\delta\le \gamma^{-1}\delta n^{1/q}_i+2\delta$$
since $n_i\delta'+\delta\le \frac{3}{\delta^q}\delta'+\delta\le \frac{3}{\delta^q}\frac{\delta^{q+1}}{\gamma^{-q}m}+\delta\le \frac{\delta}{m}+\delta<2\delta$. 

If $|a_i|\ge \varepsilon$ then $n_i\neq 0$ and from above $\varepsilon\le \|a_iy_i\|\le \gamma^{-1}\delta n^{1/q}_i+2\delta\le 3\gamma^{-1}\delta n^{1/q}_i$ since $\gamma^{-1} n^{1/q}_i>1$. Thus
$$n^{1/q}_i>\frac{\varepsilon\gamma}{3\delta}=2m.$$

Let $$N=\sum_{\{i:|a_i|\ge \varepsilon\}}n_i.$$ Then $2m\delta<\delta n^{1/q}_i\le \delta N^{1/q}$, and by above $N\delta'<\delta$. We have, using Lemma \ref{l7} again, 

\begin{align*}
\Big\|\sum_{\{i:|a_i|\ge \varepsilon\}}a_i y_i\Big\|&\le \gamma^{-1}\delta N^{1/q}+N\delta'+m\delta\\
&\le \gamma^{-1}\delta N^{1/q}+\delta+m\delta\\
&\le \gamma^{-1}\delta N^{1/q}+2m\delta\\
&\le \gamma^{-1}\delta N^{1/q}+\delta N^{1/q}.\\
\end{align*} Thus 
\begin{equation}\label{big-coeff}
\Big\|\sum_{\{i:|a_i|\ge \varepsilon\}}a_i y_i\Big\|\le 2\gamma^{-1}\delta N^{1/q}.
\end{equation}

On the other hand, by Lemma \ref{l4} we have
\begin{align}\label{big-coeff-lower}
\Big\|\sum_{\{i:|a_i|\ge \varepsilon\}}a_i y_i\Big\|&\ge  3^{-1/q}\Big(\sum_{\{i:|a_i|\ge \varepsilon\}}|a_i|^q\Big)^{1/q}
\ge  3^{-1/q}(1-\varepsilon m)^{1/q}\ge \frac12 3^{-1/q}, 
\end{align}

and

\begin{align*}
\Big\|\sum_{\{i:|a_i|< \varepsilon\}}a_i y_i\Big\|<m\varepsilon\stackrel{\ref{ep-delta}}<\frac14 3^{-1/q}\stackrel{\ref{big-coeff-lower}}\le\frac12\Big\|\sum_{\{i:|a_i|\ge \varepsilon\}}a_i y_i\Big\|\stackrel{\ref{big-coeff}}<\gamma^{-1}\delta N^{1/q}.
\end{align*}

Thus by the triangle inequality
\begin{align*}
\Big\|\sum_{i=1}^m a_i y_i\Big\|^q&<3^q\gamma^{-q}\delta^q N\\
&\le 3^q\gamma^{-q}\delta^q \sum_{\{i:|a_i|\ge \varepsilon\}}n_i\\
&\le 3^{q+1}\gamma^{-q} \sum_{\{i:|a_i|\ge \varepsilon\}}|a_i|^q\quad \text{by}\ (\ref{n_i})\\
&\le 3^{q+1}\gamma^{-q}.
\end{align*}

\end{proof}

\begin{thm}\label{t10}
Let $1<p<\infty$ and $\gamma>0$ be sufficiently small. Every subsymmetric basic sequence in the dual space $Ti^*(p,\gamma)$ is equivalent to the subsymmetric canonical basis $(e^*_j)_{j=1}^\infty$ which is not symmetric.
\end{thm}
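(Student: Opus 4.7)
The plan is to follow the blueprint of \cite{CDKM}: first reduce a given subsymmetric basic sequence to a normalized block basis of $(e_j^*)$, and then dichotomize on the behavior of $\|u_j^*\|_\infty$. So let $(u_j^*)$ be a subsymmetric basic sequence in $Ti^*(p,\gamma)$. The space is reflexive by Proposition \ref{p2}(ii), and $(u_j^*)$ is unconditional, so no subsequence can be equivalent to the unit vector basis of $\ell_1$; Rosenthal's $\ell_1$-theorem together with a standard argument then yields a weakly null subsequence, and a Bessaga--Pe\l czy\'nski selection produces a further subsequence equivalent to a normalized block basis of $(e_j^*)$. By subsymmetry this subsequence is equivalent to $(u_j^*)$, so I may assume outright that $(u_j^*)$ itself is a normalized block basis of $(e_j^*)$.

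\emph{Case 1: $\|u_j^*\|_\infty \not\to 0$.} Passing to a subsequence (again harmless by subsymmetry), I find $n_j \in \supp u_j^*$ with $|u_j^*(e_{n_j})| \ge c > 0$. The upper estimate $\|\sum a_j u_j^*\|\le \|\sum a_j e_j^*\|$ is immediate from Lemma \ref{l6} applied to Proposition \ref{p1} together with the $1$-subsymmetry of $(e_j^*)$. For the lower estimate, I would restrict $\sum a_j u_j^*$ to the spike coordinates $\{n_j\}$, use $1$-unconditionality of $(e_j^*)$ and sign absorption, and invoke $1$-subsymmetry of $(e_j^*)$ once more, obtaining $\|\sum a_j u_j^*\|\ge c\|\sum a_j e_j^*\|$. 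Hence $(u_j^*)\sim (e_j^*)$.

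\emph{Case 2: $\|u_j^*\|_\infty \to 0$.} Here Proposition \ref{l9} guarantees that $(u_j^*)$ is asymptotic-$\ell_q$. Given $m$ and scalars $(a_i)_{i=1}^m$, pick an increasing injection $\sigma\colon [m]\to \N$ with $\sigma(1)\ge M$, where $M$ is the threshold from the asymptotic-$\ell_q$ definition; then $(u_{\sigma(i)}^*)_{i=1}^m$ is a normalized block basis of $(u_j^*)_{j\ge M}$, so $\|\sum_{i=1}^m a_i u_{\sigma(i)}^*\|\sim_C \|(a_i)\|_q$ with constant independent of $m$. Subsymmetry transfers this uniformly back to $(u_i^*)_{i=1}^m$, forcing $(u_j^*)$ to be equivalent to the unit vector basis of $\ell_q$, in contradiction with Lemma \ref{l8}. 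Hence Case 2 cannot occur, completing the uniqueness claim. The non-symmetry of $(e_j^*)$ follows separately: if $(e_j^*)$ were symmetric then by reflexivity its biorthogonal $(e_j)$ would itself be a symmetric basic sequence in $Ti(p,\gamma)$, contradicting Proposition \ref{p5}.

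The step requiring the most care is the opening reduction to a normalized block basis, which implicitly leans on Rosenthal's theorem and the interplay between reflexivity and unconditionality; once that is in hand, the dichotomy is essentially bookkeeping, with Proposition \ref{l9} and Lemma \ref{l8} supplying the crucial $\ell_q$-obstruction in Case 2 and Proposition \ref{p1} with Lemma \ref{l6} handling the upper bound in Case 1.
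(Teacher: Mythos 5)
Your proposal is correct and follows essentially the same route as the paper: reduce to a normalized block basis of $(e_j^*)$, then dichotomize on $\|u_j^*\|_\infty$, using Proposition \ref{l9} together with Lemma \ref{l8} to rule out the vanishing case and the spike argument with Lemma \ref{l6} and Proposition \ref{p1} to get two-sided domination in the other. Your added details (Rosenthal/Bessaga--Pe\l czy\'nski for the initial reduction, and the explicit duality step for non-symmetry via Proposition \ref{p5}) are just elaborations of steps the paper leaves implicit.
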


\begin{proof}
By Proposition \ref{p5} $(e^*_j)_{j=1}^\infty$ is not symmetric. 

Let $(x^*_j)_{j=1}^\infty$ be a normalized subsymmetric basic sequence in $Ti^*(p,\gamma)$. By passing to a subsequence we may assume that $(x^*_j)_{j=1}^\infty$ is a block basis of $(e_j)_{j=1}^\infty$. If we suppose that $\lim_{j\to\infty}\|x_j^*\|_\infty=0$, then by combining Lemma \ref{l4}, Lemma \ref{l6} and Proposition \ref{l9}, we obtain that $(x^*_j)_{j=1}^\infty$ is an asymptotic $\ell_q$ basic sequence. Then the subsymmetry would imply that $(x^*_j)_{j=1}^\infty$ is equivalent to the unit vector basis of $\ell_q$ which contradicts Lemma \ref{l8}.

Thus, by passing again to a subsequence, we may assume that for all $j\in\N$, $\|x_j^*\|_{\infty}\ge c$ for some $c>0$. Then $(x^*_j)_{j=1}^\infty$ $c$-dominates $(e^*_j)_{j=1}^\infty$. On the other hand, by Lemma \ref{l6} $(x^*_j)_{j=1}^\infty$ is 1-dominated by $(e^*_j)_{j=1}^\infty$ and therefore, they are equivalent.
\end{proof}
Reflexivity of  $Ti(p,\gamma)$ and duality yield the following
\begin{cor}  Let $1<p<\infty$ and $\gamma>0$ be sufficiently small. Every subsymmetric basis of a  quotient space of  $Ti(p,\gamma)$ is equivalent to the  canonical basis $(e_j)_{j=1}^\infty$.
\end{cor}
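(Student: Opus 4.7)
The plan is to exploit reflexivity of $Ti(p,\gamma)$ (Proposition \ref{p2}(ii)) to transfer the question from a quotient of $Ti(p,\gamma)$ to a subspace of $Ti^*(p,\gamma)$, where Theorem \ref{t10} applies.

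First, I would observe that any quotient $Q$ of $Ti(p,\gamma)$ is itself reflexive, since reflexivity passes to quotients. Let $(f_j)_{j=1}^\infty$ be a subsymmetric (in particular unconditional) basis of $Q$. In a reflexive Banach space every unconditional basis is both shrinking and boundedly complete, so the biorthogonal functionals $(f_j^*)_{j=1}^\infty$ form a basis of $Q^*$. Next, I would verify that $(f_j^*)$ is subsymmetric as a basis of $Q^*$: $1$-unconditionality of $(f_j)$ transfers to its biorthogonals, and if $(f_{j_k})$ is $C$-equivalent to $(f_j)$ in $Q$, then $(f_{j_k}^*)$ is $C$-equivalent to $(f_j^*)$ as a basic sequence in $Q^*$ (the equivalence constant of biorthogonals on the closed span equals the equivalence constant of the original sequence, since each sits in a reflexive space).

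Second, by the reflexive duality $Q = X/Y$ with $X = Ti(p,\gamma)$ gives $Q^* \cong Y^\perp \subseteq X^* = Ti^*(p,\gamma)$ isometrically. Hence $(f_j^*)$ may be viewed as a subsymmetric basic sequence in $Ti^*(p,\gamma)$. By Theorem \ref{t10}, $(f_j^*)$ is equivalent to the canonical basis $(e_j^*)$ of $Ti^*(p,\gamma)$.

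Finally, I would dualize once more: since $(f_j)$ and $(f_j^*)$ span reflexive spaces and each is the biorthogonal system of the other, equivalence of $(f_j^*)$ to $(e_j^*)$ in $Ti^*(p,\gamma)$ implies equivalence of $(f_j)$ to $(e_j)$ in $Ti(p,\gamma)$ (the isomorphism from $[(e_j^*)]$ onto $[(f_j^*)]$ that sends $e_j^* \mapsto f_j^*$ is the adjoint of an isomorphism sending $f_j \mapsto e_j$). The only point requiring care is the routine verification in the first paragraph that subsymmetry transfers to biorthogonal systems in the reflexive setting; once this is recorded, the corollary is immediate. I do not anticipate any serious obstacle, as the argument is a direct reflexive duality built on top of Theorem \ref{t10}.
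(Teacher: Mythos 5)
Your proof is correct and is exactly the argument the paper intends: the corollary is stated there with only the remark that it follows from reflexivity of $Ti(p,\gamma)$ and duality, and your proposal fills in precisely that duality argument (quotient dual as $Y^\perp\subseteq Ti^*(p,\gamma)$, Theorem \ref{t10}, then dualizing back). The routine verifications you flag (transfer of subsymmetry to the biorthogonal system, using that spans of subsequences of an unconditional basis are complemented) all go through.
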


\begin{prop}[\cite{CDKM}]\label{p11}
Let $(e^*_i)$ be a subsymmetric basis which is not equivalent to the unit vector basis of $\ell_p$ or $c_0$. Then either $(e_i)$ or $(e_i^*)$ admits a continuum of non-equivalent subsymmetric block bases.
\end{prop}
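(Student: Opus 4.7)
The plan is to extract continuum many non-equivalent subsymmetric block bases by exploiting a mismatch between the fundamental function and a pure power law. Set $\phi(n):=\|\sum_{i=1}^n e_i\|$ and $\phi^*(n):=\|\sum_{i=1}^n e_i^*\|$; by $1$-unconditionality $\phi(n)\phi^*(n)\asymp n$, and subsymmetry makes both essentially submultiplicative. Fekete's lemma applied to $\log\phi$ then yields a Krivine-type index $p\in[1,\infty]$ with $\phi(n)=n^{1/p+o(1)}$, and dually $\phi^*(n)=n^{1/q+o(1)}$ with $1/p+1/q=1$.

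Given any increasing sequence $\mathbf{n}=(n_k)$ with sufficient gaps, form the constant-coefficient normalized averages $u_k^{\mathbf{n}}:=\phi(n_k)^{-1}\sum_{i\in A_k}e_i$ on consecutive intervals $A_k$ with $|A_k|=n_k$. By the subsymmetry of $(e_i)$ each $(u_k^{\mathbf{n}})$ is automatically $1$-subsymmetric, and its equivalence class is controlled by the profile of $\phi$ along $(n_k)$. Running the analogous construction in $(e_i^*)$ gives block bases $(v_k^{\mathbf{n}})$. I would argue the contrapositive: if both $(e_i)$ and $(e_i^*)$ admit only countably many non-equivalent subsymmetric block bases of this form, then $\phi(n)\asymp n^{1/p}$ and $\phi^*(n)\asymp n^{1/q}$, and combining these rigidities with the $1$-unconditionality and subsymmetry of $(e_i^*)$ forces matching upper and lower estimates that pin the basis to the unit vector basis of $\ell_p$ or $c_0$.

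To produce the continuum, I would run a dyadic tree argument. The oscillation of $\phi/n^{1/p}$ (or of $\phi^*/n^{1/q}$) lets one choose at each level $\ell$ two widely separated scales $a_\ell\ll b_\ell$ on which the ratio takes very different values. For each $\varepsilon\in 2^{\N}$, select $n_k^\varepsilon\in\{a_k,b_k\}$ according to $\varepsilon_k$; a Cantor-style extraction then yields a continuum of $\varepsilon$'s giving pairwise non-equivalent block bases $(u_k^\varepsilon)$ or $(v_k^\varepsilon)$.

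The main obstacle is the quantitative step of this extraction: one must ensure that the fundamental function carries enough separation information at the sampled scales to distinguish uncountably many equivalence classes rather than merely countably many. This is where the essential submultiplicativity of $\phi$, together with the $1$-unconditional subsymmetry that pins down norms of block averages in terms of $\phi$ itself, becomes indispensable, as it reduces the equivalence-class computation to comparing two numerical sequences derived from $\phi$ along $\mathbf{n}^\varepsilon$.
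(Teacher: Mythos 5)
The paper offers no proof of this proposition --- it is quoted verbatim from Theorem 21 of \cite{CDKM} and used as a black box to deduce Corollary \ref{c12} --- so there is no in-paper argument to compare your route against. Judged on its own terms, your proposal has a structural flaw that cannot be repaired within the strategy you describe. The engine of your argument is the oscillation of $\phi(n)/n^{1/p}$ (or $\phi^*(n)/n^{1/q}$), and the contrapositive you rely on is: if that oscillation is absent, i.e.\ $\phi(n)\asymp n^{1/p}$ and $\phi^*(n)\asymp n^{1/q}$, then the rigidity ``pins the basis to the unit vector basis of $\ell_p$ or $c_0$.'' That implication is false, and the space this paper studies is a counterexample: by Proposition \ref{p2}(i) the canonical basis of $Ti(p,\gamma)$ satisfies $\phi(n)\asymp n^{1/p}$, by Lemmas \ref{l4} and \ref{l7} its dual basis satisfies $\phi^*(n)\asymp n^{1/q}$, both with uniform constants, and yet $Ti(p,\gamma)$ contains no copy of any $\ell_r$ or of $c_0$ (Proposition \ref{p2}(ii)). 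Thus for precisely the space to which Proposition \ref{p11} is applied in Corollary \ref{c12}, your dyadic tree has no scales to select: $\phi/n^{1/p}$ does not oscillate, every constant-coefficient average has norm comparable to $n_k^{1/p}$ with absolute constants, and the ``profile of $\phi$ along $\mathbf{n}$'' cannot distinguish even two equivalence classes, let alone a continuum. Whatever produces the continuum in \cite{CDKM}, it is not the fundamental function.

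There are secondary gaps as well. The assertion that each $(u_k^{\mathbf{n}})$ is ``automatically $1$-subsymmetric'' does not follow from the subsymmetry of $(e_i)$: spreading a fixed block basis preserves norms, but a subsequence $(u_{k_j}^{\mathbf{n}})_j$ is the sequence of normalized averages along the different length profile $(n_{k_j})_j$, and its equivalence to $(u_j^{\mathbf{n}})_j$ is exactly the kind of statement that requires proof (and can fail). Also, $\phi(n)\phi^*(n)\asymp n$ is immediate only from below; for merely subsymmetric, as opposed to symmetric, bases the upper bound needs an argument and in general costs at least a logarithmic factor --- harmless for a $n^{1/p+o(1)}$ statement, but not the ``$1$-unconditionality'' triviality you invoke. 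The first paragraph's objection is the decisive one: the approach is not incomplete but incapable of yielding the proposition.
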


This, together with Theorem \ref{t10}, give us the following

\begin{cor}\label{c12}
For $1<p<\infty$ and sufficiently small $\gamma$, the basis $(e_i)$ of $Ti(p,\gamma)$ has a continuum many non-equivalent subsymmetric block bases.
\end{cor}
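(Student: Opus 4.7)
The plan is to apply Proposition \ref{p11} to the canonical basis $(e^*_i)$ of $Ti^*(p,\gamma)$, and use Theorem \ref{t10} to rule out the dual alternative in the dichotomy. First, I would verify that $(e^*_i)$ satisfies the hypothesis of Proposition \ref{p11}, i.e., that it is subsymmetric but not equivalent to the unit vector basis of any $\ell_r$ or of $c_0$. Subsymmetry is automatic from the subsymmetry of $(e_i)$ (with reflexivity from Proposition \ref{p2}(ii)). Inequivalence with the standard bases of $\ell_r$ and $c_0$ follows because those bases are symmetric, while $(e^*_i)$ is subsymmetric but not symmetric by Theorem \ref{t10}; an equivalence between a subsymmetric basis and a symmetric one would transport symmetry back to $(e^*_i)$.

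Next, I would invoke Proposition \ref{p11} to conclude that either $(e_i)$ or $(e^*_i)$ admits a continuum of pairwise non-equivalent subsymmetric block bases. The key observation is that $(e^*_i)$ cannot be the one: every subsymmetric block basis of $(e^*_i)$ in $Ti^*(p,\gamma)$ is itself a subsymmetric basic sequence in $Ti^*(p,\gamma)$, and by Theorem \ref{t10} all such sequences are equivalent to $(e^*_i)$. Hence $(e^*_i)$ admits, up to equivalence, only one subsymmetric block basis, ruling out continuum many non-equivalent ones. Therefore the continuum must be realized in $(e_i)$, as claimed.

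There is no serious obstacle here; the corollary is a direct combination of two results already in place. The only point one might double-check is that $(e^*_i)$ is genuinely not equivalent to any symmetric basis — but this is immediate from the preceding theorem together with the observation that equivalence preserves symmetry among subsymmetric bases.
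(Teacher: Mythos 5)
Your proposal is correct and follows exactly the route the paper intends: the paper derives the corollary by combining Proposition \ref{p11} (the dichotomy) with Theorem \ref{t10}, which rules out the $(e_i^*)$ alternative since all subsymmetric basic sequences in $Ti^*(p,\gamma)$ are equivalent. Your verification that $(e_i^*)$ is not equivalent to the unit vector basis of any $\ell_r$ or $c_0$ (because equivalence to a symmetric basis would force symmetry, contradicting Theorem \ref{t10}) correctly fills in the one hypothesis the paper leaves implicit.
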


\end{document}